\theoremstyle{plain}
\newtheorem{theorem}{Theorem}[section]
\newtheorem{proposition}[theorem]{Proposition}
\newtheorem{lemma}[theorem]{Lemma}
\newtheorem{corollary}[theorem]{Corollary}
\theoremstyle{remark}
\newtheorem{remark}[theorem]{Remark}
\newtheorem{example}[theorem]{Example}
\theoremstyle{definition}
\newtheorem{definition}{Definition}[section]
\begin{document}
\title[Nelson algebras of rough sets induced by quasiorders]{Information completeness in Nelson algebras of rough sets induced by quasiorders}

\author[J.~J{\"a}rvinen]{Jouni J{\"a}rvinen}
\address{{\sc J.~J\"{a}rvinen}: Sirkkalankatu~6\\ 20520~Turku\\ Finland} 
\email{Jouni.Kalervo.Jarvinen@gmail.com}

\author[P.~Pagliani]{Piero Pagliani}
\address{{\sc P.~Pagliani}: Research Group on Knowledge and Communication Models\\ Via~imperia~6\\ 00161~Roma\\ Italy}
\email{p.pagliani@agora.it}

\author[S.~Radeleczki]{S{\'a}ndor Radeleczki}
\thanks{This research was carried out as part of the TAMOP-4.2.1.B-10/2/KONV-2010-0001 project supported by the European Union, co-financed by the European Social Fund, which
is gratefully acknowledged by S{\'a}ndor Radeleczki}
\address{{\sc S.~Radeleczki}: Institute of Mathematics\\ University of Miskolc\\ 3515~Miskolc-Egyetemv{\'a}ros\\ Hungary}
\email{matradi@uni-miskolc.hu}

\begin{abstract}
In this paper, we give an algebraic completeness theorem for constructive logic with 
strong negation in terms of finite rough set-based Nelson algebras determined by quasiorders. 
We show how for a quasiorder $R$, its rough set-based Nelson algebra can be obtained by applying 
the well-known construction by Sendlewski. We prove that if the set of all $R$-closed
elements, which may be viewed as the set of completely defined objects, is cofinal, then the rough set-based Nelson
algebra determined by a quasiorder forms an effective lattice, that is, an algebraic model of 
the logic $E_0$, which is characterised by a modal operator grasping the notion of ``to be classically valid''.
We present a necessary and sufficient condition under which a Nelson algebra is isomorphic to a rough 
set-based effective lattice determined by a quasiorder.
\end{abstract}

\keywords{rough sets, Nelson algebras, quasiorders (preorders), knowledge representation, Boolean congruence,
Glivenko congruence, logics with strong negation}

\maketitle

\section{Motivation: Mixing classical and non-classical logics}
\label{Sec:Intro}

Mixing logical behaviours is a more and more investigated topic in logic.
For instance, labelled deductive systems by D.~M.~Gabbay \cite{Gabbay97}
are used at this aim, and the ``stoup'' mechanism introduced by J-Y.~Girard
in \cite{Girard93} makes intuitionistic and classical deductions interact.

In 1989, P.~A.~Miglioli with his co-authors \cite{Miglioli89a} introduced a
constructive logic with strong negation, called \emph{effective logic zero}
and denoted by $E_{0}$, containing a modal operator
$\mathbf{T}$ such that for any formula $\alpha$ of $E_{0}$, $\mathbf{T} (\alpha)$
means that $\alpha$ is classically valid.
More precisely, given a Hilbert-style calculus for constructive logic
with strong negation (CLSN), also called Nelson logic \cite{Nelson49}, the rules for
{\bf T} are
\begin{center}
$({\sim} \alpha \rightarrow \perp) \rightarrow \mathbf{T}(\alpha)$ \qquad and \qquad
$(\alpha \rightarrow \perp) \rightarrow {\sim} \mathbf{T}(\alpha)$,
\end{center}
where $\sim$ denotes the \emph{strong negation}.
One obtains that $\alpha$ is valid in classical logic (CL) if and only if $\mathbf{T}(\alpha)$ is provable in $E_{0}$.
Therefore, $\mathbf{T}$ acts as an intuitionistic double negation $\neg\neg$ which, in view of the
G\"{o}del-Glivenko theorem, is able to grasp classical validity in the intuitionistic propositional
calculus (INT) by stating that $ \vdash_\mathrm{CL} \, \alpha$ if and only if
$\vdash_\mathrm{INT} {\neg\neg \alpha}$.

However, $\mathbf{T}$ fulfils additional distinct features. Firstly, CLSN is equipped with a weak
negation $\neg$, defined similarly to the intuitionistic negation. But, the combinations
$\neg\neg$, ${\sim}\neg$, or $\neg{\sim}$ are not able to cope with classical tautologies
(see \cite{pagliani2008geometry}, for example).
Secondly, consider the Kuroda formula
$\forall x \, \neg\neg \alpha(x) \to \neg\neg \forall x \, \alpha(x)$.
As noted in \cite{Miglioli89a}, it is an example of the divergence between double negation and
an operator intended to represent classical truth, because  the formula
$\forall x \, \mathbf{T} ( \alpha(x) )\to \mathbf{T} ( \forall x \, \alpha(x))$
should be intuitively valid if $\mathbf{T}$ represents classical truth. But the Kuroda formula
is unprovable in intuitionistic predicate calculus, while the above-presented $\mathbf{T}$-translation
(and some other translations, too) are provable even in the the predicative version of $E_0$.

The motivation of the logical system $E_{0}$ was to grasp two distinct aspects of computation in program
synthesis and specification: the algorithmic aspect and data. The latter are supposed to be given, not to be proved or
computed; in fact ``data'' is the Latin plural of ``datum'', which, literally, means ``given''.
A single undifferentiated logic is not a wise choice to cope with both aspects, therefore in $E_{0}$ 
there are two different
logics at work: a constructive logic,  representing algorithms, and classical logic, representing the behaviour of data.
Data are assumed not to be constructively analysable and this is connected to the problem of 
the meaning of an atomic formula from a constructive point of view.
Since the meaning of a formula is given by its construction, according to the constructivistic philosophy,
and since its construction depends on the logical structure
of the formula, the meaning of an atomic formula, which as such has no structure, is the atomic formula itself.

This is the solution adopted by Miglioli and others in \cite{Miglioli89b}.
In that paper, it is assumed that atomic formulas cannot have a constructive proof, therefore $p$ and
$\mathbf{T}(p)$ must coincide, that is, an axiom schema
\begin{equation} \label{Eq:Atomic}
 p \leftrightarrow \mathbf{T}(p) \tag{$\star$}
\end{equation}
is included for propositional variables.
Axiom \eqref{Eq:Atomic} together with the $\mathbf{T}$-version of the Kreisel-Putnam principle
\cite{KreiselPutnam}, that is,
\begin{equation}\tag{T-KP}\label{Eq:TKP}
(\mathbf{T}(\alpha) \rightarrow (\beta\vee\gamma) )
\rightarrow((\mathbf{T}(\alpha) \rightarrow \beta) \vee (\mathbf{T}(\alpha)
\rightarrow\gamma))
\end{equation}
characterises the logic ${\mathcal F}_{\rm CL}$.
Because of ($\star$) the logic ${\mathcal F}_{\rm CL}$ is not standard in the sense that
it does not enjoy uniform substitution.
However, its \emph{stable part}, that is, the part which is closed under uniform substitution, coincides with
a well-known maximal intermediate constructive logic, namely Medvedev's logic, a faithful interpretation of the
intuitionistic logical principles (see \cite{Medvedev62,Miglioli89b}).

One year later, P.~Pagliani \cite{Pagliani90} was able to exhibit an algebraic model
for $E_0$. It turned out that these models are a special kind of Nelson algebras,
called {\em effective lattices}.

The paper is structured as follows. In the next section we recall some well-known facts
about Heyting algebras, Nelson algebras, and effective lattices. In Section~\ref{Sec:RS_equivalence},
we recollecting some well-known results related to rough sets defined by equivalence relations and 
the semi-simple Nelson algebras they determine.
In Section~\ref{Sec:RS_quasiorders}, we recall the fact that rough set systems induced by quasiorders 
determine Nelson algebras, and show how these algebras can be obtained by Sendlewski's construction. 
We also present a completeness theorem for CLSN in terms of finite rough set-based Nelson algebras. 
We give several equivalent conditions under which rough set-based Nelson algebras form effective
lattices, and this enables us to characterize the Nelson algebras which are isomorphic to rough 
set-based effective lattices determined by quasiorders. Some concluding remarks of 
Section~\ref{Sec:Conclusions} end the work. In particular, it is shown how the logic $E_0$ can be interpreted
in terms of rough sets by following the very philosophy of rough set theory.

\section{Preliminaries: Heyting algebras, Nelson algebras, and \mbox{effective} lattices}
\label{Sec:Preliminary}

A \textit{Kleene algebra} is a structure $(A, \vee, \wedge, {\sim}, 0, 1)$ such that
$A$ is a 0,1-bounded distributive lattice and for all $a,b \in A$:
\begin{enumerate}[({K}1)]
\item ${\sim}\,{\sim}a  =  a $
\item $a \leq b  \text{ if and only if }  {\sim}b \leq {\sim}a$
\item $a \wedge {\sim}a  \leq  b \vee {\sim}b$
\end{enumerate}
A \textit{Nelson algebra}
$(A, \vee, \wedge, \rightarrow, {\sim}, 0, 1)$ is a Kleene algebra
$(A, \vee, \wedge, {\sim}, 0, 1)$ such that for all $a,b,c\in A$:
\begin{enumerate}[({N}1)]
\item $a\wedge c \leq {\sim} a\vee b$ if and only if $c\leq a\rightarrow b$,
\item $(a\wedge b)\rightarrow c = a \rightarrow (b\rightarrow c)$.
\end{enumerate}
In each Nelson algebra, an operation $\neg$ can be defined as
$\neg a = a \to 0$. The operation $\to$ is called \emph{weak relative
pseudocomplementation}, $\sim$ is called \emph{strong negation},
and $\neg$ is called \emph{weak negation}. A Nelson algebra
is \emph{semi-simple} if $a \vee \neg a = 1$ for all $a \in A$. It is well
known that semi-simple Nelson algebras coincide with three-valued {\L}ukasiewicz
algebras and regular double Stone algebras.

An element $a^*$ in a lattice $L$ with $0$ is called a \emph{pseudocomplement} 
of $a \in L$, if $a \wedge x =0 \iff x \leq a^*$ for all $x \in L$. 
If a pseudocomplement of $a$ exists, then it is unique, and a lattice in which every 
element has a pseudocomplement is called a \emph{pseudocomplemented lattice}. 
Note that pseudocomplemented lattices are always bounded. 
An element $a$ of pseudocomplemented lattice is 
\emph{dense} if $a^* = 0$. A \emph{Heyting algebra} $H$ is a lattice with $0$ 
such that for all $a,b \in H$, there is a greatest element $x$ of $H$ with $a \wedge x \leq b$.
This element is the \emph{relative pseudocomplement} of $a$ with respect to $b$,
and is denoted $a \Rightarrow b$. It is known that a complete lattice is a Heyting algebra 
if and only if it satisfies the \emph{join-infinite distributive law}, that is, finite meets distribute 
over arbitrary joins. In a Heyting algebra, the \emph{pseudocomplement} of $a$ 
is $a \Rightarrow 0$. By a \emph{double Heyting algebra} we mean a Heyting algebra
$H$ whose dual $H^d$ is also a Heyting algebra. A \emph{completely distributive lattice} is a complete
lattice in which arbitrary joins distribute over arbitrary meets. Therefore,
completely distributive lattices are double Heyting algebras.

A Heyting algebra $H$ can be viewed either as a partially ordered set $(H,\leq)$, because
the operations $\vee$, $\wedge$, $\Rightarrow$, $0$, $1$ are uniquely determined by the
order $\leq$, or as an algebra $(H,\vee,\wedge,\Rightarrow,0,1)$ of type $(2,2,2,0,0)$. 
Congruences on Heyting algebras are equivalences compatible with operations $\vee$, $\wedge$,
and $\Rightarrow$. Next we recall some well-known facts about congruences on Heyting algebras 
that can be found \cite{blyth2005lattices}, for instance.
Let $L$ be a distributive lattice and let $F$ be a filter of $L$. The equivalence
\[ \theta(F) = \{ (x,y) \mid (\exists z \in F) \, x \wedge z = y \wedge z \} \]
is a congruence on $L$. It is known that if $H$ is a Heyting algebra,
then $\theta(F)$ is a congruence on $H$, that is, $\theta(F)$ is compatible also
with $\Rightarrow$. Additionally, all congruences on Heyting algebras
are obtained by this construction.
A congruence on a Heyting algebra is said to be a \emph{Boolean congruence}
if its quotient algebra is a Boolean algebra. For a Heyting algebra $H$,
a filter $F$ contains the filter $D$ of all the dense elements of $H$ if and
only if $\theta(F)$ is a Boolean congruence. This means that $\theta(D)$
is the \emph{least} Boolean congruence on $H$, which is known as the \emph{Glivenko congruence} 
$\Gamma$, expressed also as 
\[
\Gamma = \{ (a,b) \mid a^* = b^* \}.
\]

\begin{lemma} \label{Lem:BooleanCongruencePseudo}
Let $H$ be a Heyting algebra and let $a \leq d$
for all dense elements $d$ of $H$. The equivalence
  \[ {\cong_a} = \{ (x,y) \mid  x \wedge a = y \wedge a \}
  \]
is a Boolean congruence on $H$.
\end{lemma}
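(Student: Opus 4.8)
The plan is to recognise $\cong_a$ as the congruence $\theta(F)$ associated with a suitable filter $F$ of $H$, and then to invoke the two recalled facts about Heyting algebras: that $\theta(F)$ is always a congruence, and that it is a Boolean congruence precisely when $F$ contains the filter $D$ of all dense elements.

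First I would take $F = {\uparrow} a = \{\, x \in H : a \leq x \,\}$, the principal filter generated by $a$, which is obviously a filter of $H$. The key step is to verify that ${\cong_a} = \theta(F)$. For the inclusion $\theta(F) \subseteq {\cong_a}$, suppose $(x,y) \in \theta(F)$, so that $x \wedge z = y \wedge z$ for some $z \in F$; since $a \leq z$ we have $z \wedge a = a$, and meeting the equation with $a$ yields $x \wedge a = (x \wedge z) \wedge a = (y \wedge z) \wedge a = y \wedge a$, whence $(x,y) \in {\cong_a}$. The reverse inclusion is immediate, since if $x \wedge a = y \wedge a$ then $a \in F$ itself witnesses $(x,y) \in \theta(F)$. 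This short computation, which exploits that $a$ is the least element of $F$, is really the only non-routine point of the argument.

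It then remains to apply the recalled results. Since $\theta(F)$ is a congruence on every Heyting algebra, ${\cong_a} = \theta(F)$ is a congruence on $H$. Finally, the hypothesis says precisely that every dense element $d$ of $H$ satisfies $a \leq d$, i.e.\ $d \in F$, so the filter $D$ of all dense elements is contained in $F$; by the recalled characterisation of Boolean congruences, $\theta(F)$ — and hence ${\cong_a}$ — is a Boolean congruence. I do not anticipate any genuine obstacle: the statement is essentially a repackaging of the filter–congruence correspondence on Heyting algebras once one identifies the right filter.
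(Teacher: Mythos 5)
Your proposal is correct and follows essentially the same route as the paper: identify ${\cong_a}$ with $\theta(F_a)$ for the principal filter $F_a = \{x \in H \mid a \leq x\}$, and note that the hypothesis gives $D \subseteq F_a$, so the recalled characterisation yields a Boolean congruence. The only difference is that you spell out the verification of ${\cong_a} = \theta(F_a)$, which the paper dismisses as clear with a reference.
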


\begin{proof}
Let $F_a = \{ x \in H \mid a \leq x\}$ be the principal filter of $a$.
Then $\theta(F_a)$ is a congruence on $H$, and clearly $\cong_a$ is equal to 
$\theta(F_a)$ (see e.g. \cite{pagliani2008geometry}). 
Because $a \leq d$ for all $d \in D$, we have  $D \subseteq F_a$ and so $\cong_a$ is a 
Boolean congruence.
\end{proof}

Let $\Theta$ be a Boolean congruence on a Heyting algebra $H$. 
As shown by A.~Sendlewski \cite{Sendlewski90}, the set of pairs
\begin{equation}\label{Eq:Nelson-construction}
N_\Theta (H)=
\{(a,b)\in H \times H \mid a\wedge b=0 \text{ and } a\vee b \, \Theta \, 1\}
\end{equation}
can be made into a Nelson algebra, if equipped with the operations:
\begin{align*}
(a,b) \vee(c,d)  & = (a \vee c, b \wedge d);\\ 
(a,b) \wedge(c,d)  & = (a \wedge c, b \vee d);\\
(a,b) \to(c,d)  & = (a \Rightarrow c, a \wedge d);\\ 
{\sim}(a,b)  & = (b,a).
\end{align*}
Note that $(0,1)$ is the $0$-element, $(1,0)$ is the $1$-element, and in the right-hand 
side of the above equations, the operations are those of the Heyting algebra
$H$. This Nelson algebra is denoted by $\mathbb{N}_{\Theta}(H)$.

In \cite{Pagliani90}, Pagliani introduced {\em effective lattices}. They
are special type of Nelson algebras determined by Glivenko congruences on
Heyting algebras, that is, for any Heyting algebra $H$ and its
Glivenko congruence $\Gamma$, the corresponding \emph{effective lattice}
is the Nelson algebra $\mathbb{N}_{\Gamma}(H)$. Note that for all
$x \in H$, $x \, {\Gamma} \, 1$ if and only if $x$ is dense. This
means that $N_\Gamma(H)$ consists of the pairs $(a,b)$ such
that $a \wedge b = 0$ and $a \vee b$ is dense (see Remark~2 in
\cite{Sendlewski90} and Proposition~9 of \cite{Pagliani90}).
 Additionally,  it is proved in \cite{Pagliani90} that
effective lattices are models for the logic $E_0$, with $\mathbf{T}$ defined by
$\mathbf{T}((a,b)) = (a^{**},b^{**})$. Note that each Heyting algebra
defines exactly one effective lattice, and that all effective lattices
are determined this way.

\section{Rough set theory comes into the picture}
\label{Sec:RS_equivalence}

Rough sets were introduced by Z.~Pawlak \cite{Pawl82} in order to provide a formal approach to deal
with incomplete data. In rough set theory, any set of entities (or points, or objects) comes with 
a lower approximation and an upper approximation. These approximations are defined on the basis of 
the attributes (or parameters, or properties) through which entities are observed or analysed.

Originally, in rough set theory it was assumed that the set of attributes induces an
equivalence relation $E$ on $U$ such that $x \,E \, y$ means that $x$ and $y$ cannot be
discerned on the basis of the information provided by their attribute values. Approximations
are then defined in terms of an {\em indiscernibility space}, that is,
a relational structure $(U,E)$ such that $E$ is an equivalence relation on $U$.
For a subset $X$ of $U$, the \emph{lower approximation} $X_{E}$ of $X$ consists of all
elements whose $E$-class is included in $X$, while the {\em upper approximation} $X^{E}$
is the set of the elements whose $E $-class has non-empty intersection with $X$. Therefore,
$X_{E}$ can be viewed as the set of elements which \emph{certainly} belong to $X$, and $X^{E}$
is the set of objects that \emph{possibly} are in $X$, when elements are  observed
through the knowledge synthesized by $E$.

Since inception, a number of generalisations of the notion of a rough set have been
proposed. A most interesting and useful one is the use of arbitrary binary relations
instead of equivalences. Let us now define approximations $(\cdot)_R$ and $(\cdot)^R$  
in a way that is applicable for different types of binary relations considered in this paper, 
and introduce  also other notions and notation we shall need. It is worth pointing out 
that $(\cdot)_R$ and $(\cdot)^R$ can be regarded as ``real'' lower and upper approximation operators, 
respectively, only if $R$ is reflexive, because otherwise $X_R \subseteq X$ 
and $X^R \subseteq X$ may fail to hold.
 
\begin{definition} \label{Def:BasicNotation}
Let $R$ be a reflexive relation on $U$ and $X \subseteq U$. The set 
$R(X) = \{y \in U \mid  x \, R\, y \text{ for some $x \in X$}\}$ is 
the \emph{$R$-neighbourhood} of $X$. If $X = \{a\}$, then we write $R(a)$ instead of $R(\{a\})$. 
The approximations are defined as $X_R = \{x \in U \mid R(x) \subseteq X\}$ 
and $X^R = \{x \in U \mid R(x) \cap X \neq \emptyset\}$. A set $X \subseteq U$
is called \emph{$R$-closed} if $R(X) = X$, and an element $x \in U$
is \emph{$R$-closed}, if its singleton set $\{x\}$ is $R$-closed. 
The set of $R$-closed points is denoted by $S$.
\end{definition}

Let us assume that $(U,E)$ is an indiscernibility space. 
The set of lower approximations $\mathcal{B}_{E}(U)=\{ X_{E} \mid X \subseteq U \}$ and
the set of upper approximations $\mathcal{B}^{E}(U)= \{ X^{E} \mid X \subseteq U \}$ 
coincide, so we denote this set simply by $\mathcal{B}_{E}(U)$. The set $\mathcal{B}_{E}(U)$ 
is a complete Boolean sublattice of $(\wp(U),\subseteq)$, where $\wp(U)$ denotes the
set of all subsets of $U$. This means that   $\mathcal{B}_{E}(U)$ forms a \emph{complete field of sets}. 
Complete fields of sets are in one-to-one correspondence with equivalence relations,
meaning that for each complete field of sets $\mathcal{F}$ on $U$, we can can define an equivalence 
$E$ such that $B_E(U) = \mathcal{F}$. Note that $S$ and all its subsets belong to 
$\mathcal{B}_{E}(U)$, meaning that $\wp(S)$ is a complete sublattice of $\mathcal{B}_{E}(U)$,
and therefore in this sense $S$ can be viewed to consist of \emph{completely defined objects}.
Each object in $S$ can be separated from other points of $U$ by the information 
provided by the indiscernibility relation $E$, meaning that
for any $x \in S$ and $X \subseteq U$, $x \in X_E$ if and only if $x \in X^E$.

The {\em rough set} of $X$ is the equivalence class of all $Y\subseteq U$ such that $Y_E=X_E$ and $Y^E=X^E$.
Since each rough set is uniquely determined by the approximation pair, one can represent the rough set of 
$X$ as $(X_E,  X^E)$ or $(X_E,-X^E)$. We call the former \emph{increasing representation} and the latter 
\emph{disjoint representation}. These representations induce the sets
\[
\mathit{IRS}_E(U)  = \{(X_E,X^E)\mid X\subseteq U\}
\]
and
\[
\mathit{DRS}_E(U)  =\{(X_E,-X^E)\mid X\subseteq U\},
\]
respectively. The set $\mathit{IRS}_E(U)$ can be ordered pointwise
\[(X_E,X^E) \leq (Y_E,Y^E) \iff X_E \subseteq Y_E \text{ and } X^E \subseteq Y^E,\]
and $\mathit{DRS}_E(U)$ is ordered by reversing the order for the second components
of the pairs, that is,
\[(X_E,-X^E) \leq (Y_E,-Y^E) \iff X_E \subseteq Y_E \text{ and } -X^E \supseteq -Y^E.\]
Therefore,  $\mathit{IRS}_E(U)$ and  $\mathit{DRS}_E(U)$ are order-isomorphic, and they
form completely distributive lattices, thus double Heyting algebras
\cite{Pagliani93,Pagliani97,pagliani2008geometry}.

Every Boolean lattice $B$, where $x'$ denotes the complement of $x \in B$,   
is a Heyting algebra such that $x \Rightarrow y = x' \vee y$ for $x,y \in B$. 
An element $x \in B$ is dense only if $x' = 0$, that is, $x = 1$. Because it is known that on a 
Boolean lattice each lattice-congruence is such that the quotient lattice
is a Boolean lattice, also the congruence $\cong_S$ on  $\mathcal{B}_{E}(U)$, 
defined by $X \cong_S Y$, if $X \cap S = Y \cap S$, is Boolean when
$\mathcal{B}_{E}(U)$ is interpreted as a Heyting algebra.

In \cite{Pagliani93}, it is shown that disjoint representation of rough sets can be characterized as
\begin{equation}\label{Eq:DRS-construction}
\mathit{DRS}_E(U) = \{(A,B) \in \mathcal{B}_E(U)^2 \mid A\cap B =\emptyset\ \text{ and }  A \cup B \cong_{S}U\}.
\end{equation}
Thus, ${DRS}_E(U)$ coincides with the Nelson lattice $N_{\cong_S} (\mathcal{B}_E(U))$.
Since $\mathcal{B}_{E}(U)$ is a Boolean lattice, ${\mathbb N}_{\cong_S} (\mathcal{B}_E(U))$
is a semi-simple Nelson algebra (cf. \cite{Pagliani96}).
As a consequence, we obtain the well-known facts that
rough sets defined by equivalences determine also regular double Stone algebras and three-valued
{\L}ukasiewicz algebras.

In the literature also several representation theorems related to
rough sets induced by equivalences can be found. For instance, in \cite{Pagliani97} it was proved that for any
{\em finite} three-valued \L ukasiewicz algebra $\mathbb A$, there is an indiscernibility space $(U,E)$ such that
$\mathbb{N}_{\cong_S}(\mathcal{B}_{E}(U))$ is isomorphic to $\mathbb{A}$.
This result was extended by L.~Iturrioz \cite{Iturrioz99} by showing that any three-valued
{\L}ukasiewicz algebra is a subalgebra of $\mathit{IRS}_E(U)$ for some indiscernibility space $(U,E)$.
Finally, it has been proved by J.~J\"{a}rvinen and S.~Radeleczki \cite{JarRad} that for any semi-simple
Nelson algebra $\mathbb{A}$ with an underlying algebraic lattice there exists an indiscernibility space
$(U, E)$ such that $\mathbb{A}$ is isomorphic to $\mathbb{N}_{\cong_S}(\mathcal{B}_{E}(U))$.
From the latter representation theorem one obtains that every semi-simple Nelson algebra,
regular double Stone algebra and three-valued {\L}ukasiewicz algebra that are
defined on algebraic lattices can be obtained from an indiscernibility space $(U,E)$ by
using Sendlewski's construction \eqref{Eq:Nelson-construction}.
Note that an \emph{algebraic lattice} is a complete lattice $L$ such that each element $x$ of $L$ is the join of a set of
compact elements of $L$, and thus finite lattices are trivially algebraic. 

On  $\mathcal{B}_{E}(U)$, the Glivenko congruence is simply
the identity relation. This means that the effective lattice determined by the indiscernibility
space $(U,E)$ is just the collection of all ordered pairs of disjoint elements of $\mathcal{B}_{E}(U)$
such that $X \cup Y = U$. Hence, $\mathbb{N}_{\Gamma}(\mathcal{B}_{E}(U))$ equals the set
of pairs $\{(X,-X) \mid X \in \mathcal{B}_{E}(U)\}$, which trivially is an isomorphic copy of
$\mathcal{B}_{E}(U)$ itself. Therefore, in the case of equivalence relations, effective
lattices do not appear that interesting, because on  $\mathbb{N}_{\Gamma}(\mathcal{B}_{E}(U))$
for any formula $\alpha$ we would have ${\bf T}(\llbracket\alpha\rrbracket)=\llbracket\alpha\rrbracket$,
where $\llbracket\alpha\rrbracket$ is the ordered pair interpreting $\alpha$.

Then a question arises: {\em Is there
any generalization which makes it possible to go ahead and develop a full correspondence
between rough set systems and effective lattices?}

\section{Effective lattices and quasiorders}
\label{Sec:RS_quasiorders}

For a quasiorder $R$ on $U$, as in case of equivalences, we may define
the \emph{increasing representation} and the \emph{disjoint representation},
respectively, by
\[
\mathit{IRS}_R(U)  = \{(X_R,X^R)\mid X\subseteq U\}
\]
and
\[
\mathit{DRS}_R(U)  =\{(X_R,-X^R)\mid X\subseteq U\},
\]
and these sets can be identified by the bijection $(X_R,X^R) \mapsto (X_R,-X^R)$.

As shown by J.~J\"{a}rvinen, S.~Radeleczki, and L.~Veres \cite{JRV09}, $\mathit{IRS}_{R}(U)$ is a complete 
sublattice of $\wp(U) \times\wp(U)$ ordered by the pointwise set-inclusion relation, meaning that $\mathit{IRS}_R(U)$ 
is an algebraic completely distributive lattice such that
\[
\bigwedge\left\{  ( X_{R}, X^{R} ) \mid X\in\mathcal{H} \right\}  =
\Big ( \bigcap_{X \in\mathcal{H}} X_{R}, \bigcap_{X\in\mathcal{H}} X^{R}
\Big )
\]
and
\[
\bigvee\left\{  ( X_{R}, X^{R} ) \mid X\in\mathcal{H} \right\}  =
\Big ( \bigcup_{X\in\mathcal{H}} X_{R}, \bigcup_{X\in\mathcal{H}} X^{R} \Big )
\]
for all $\mathcal{H} \subseteq \mathit{IRS}_{R}(U)$. 
Since $\mathit{IRS}_{R}(U)$ is completely distributive, it is a double Heyting algebra. 

J\"{a}rvinen and Radeleczki proved in \cite{JarRad} that the bounded distributive
lattice $\mathit{IRS}_R(U)$ equipped with the operation $\sim$ defined by ${\sim}(X_R,X^R) = (-X^R,-X_R)$ 
forms a Kleene algebra satisfying the interpolation property of \cite{Cign86}.
It is proved by R.~Cignoli \cite{Cign86} that any Kleene algebra that satisfies this
interpolation property and is such that for each pair $a$ and $b$ of its elements, 
the relative pseudocomplement $a \Rightarrow {\sim} a \vee b$ exists, forms a Nelson algebra
in which the operation $\to$ is determined by the rule $a \to b := a \Rightarrow {\sim} a \vee b$.
Therefore, as noted in \cite{JarRad}, for any quasiorder $R$ on $U$, $\mathit{IRS}_R(U)$ 
together with the operation $\sim$ forms a Nelson algebra. This Nelson algebra 
is denoted by  $\mathbb{IRS}_R(U)$, and its operations
will be described explicitly in Corollary~\ref{Cor:IRS_Sendlewski}.

In \cite{JarRad}, it is also proved that if $\mathbb{A}$ is a Nelson algebra defined on an algebraic lattice,
then there exists a set $U$ and a quasiorder $R$ on $U$ such that $\mathbb{A}$ and the Nelson algebra
$\mathbb{IRS}_{R}(U)$ are isomorphic. From this, we can deduce the following
completeness result, with the finite model property, for CLSN, whose axiomatisation can be found in
\cite{Rasiowa74,Vaka77}, for example.

\begin{theorem} \label{Thm:CompleteCLSN}
Let $\alpha$ be a formula of CLSN. Then the following conditions are equivalent:
\begin{enumerate}[\rm (a)]
 \item $\alpha$ is a theorem,
 \item $\alpha$ is valid in every finite rough set-based Nelson algebra determined by a quasiorder.
\end{enumerate}
\end{theorem}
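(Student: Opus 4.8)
The plan is to obtain the result as a combination of three ingredients: the algebraic completeness of CLSN with respect to the variety of \emph{all} Nelson algebras, the representation theorem of \cite{JarRad} recalled above, and the finite model property of CLSN. Nothing essentially new has to be proved; the work consists in fitting these pieces together.

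For the implication (a)$\Rightarrow$(b) I would invoke the soundness half of the algebraic semantics for CLSN (as axiomatised in \cite{Rasiowa74,Vaka77}): every theorem of CLSN is valid in every Nelson algebra. Since, for each quasiorder $R$ on a set $U$, the structure $\mathbb{IRS}_R(U)$ is a Nelson algebra — this is exactly the Nelson algebra produced from $\mathit{IRS}_R(U)$ via Cignoli's theorem, as recalled above — a theorem of CLSN is in particular valid in every finite $\mathbb{IRS}_R(U)$, that is, in every finite rough set-based Nelson algebra determined by a quasiorder. This direction requires no further argument.

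For the converse (b)$\Rightarrow$(a) I would argue contrapositively. Assume $\alpha$ is not a theorem. By completeness there are a Nelson algebra $\mathbb{A}$ and a valuation under which $\alpha$ does not take the value $1$. The next step is to pass to a \emph{finite} refuting algebra: CLSN is known to have the finite model property, so $\alpha$ is already refuted in some finite Nelson algebra $\mathbb{A}_0$. The lattice reduct of $\mathbb{A}_0$ is finite, hence algebraic, so the representation theorem of \cite{JarRad} applies and yields a set $U$ and a quasiorder $R$ on $U$ with $\mathbb{A}_0 \cong \mathbb{IRS}_R(U)$; as $\mathbb{A}_0$ is finite, this is a finite rough set-based Nelson algebra determined by a quasiorder, and $\alpha$ fails in it. This establishes the contrapositive of (b)$\Rightarrow$(a), hence that implication itself, and completes the equivalence.

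The only genuinely non-routine point is the passage to a finite model, i.e.\ the finite model property for CLSN; I expect this to be where any care is needed. If one wants a self-contained argument rather than a citation, the cleanest route is: replace $\mathbb{A}$ by the subalgebra generated by the (finitely many) values of the propositional variables occurring in $\alpha$; present this subalgebra as $\mathbb{N}_\Theta(H)$ for a suitable Heyting algebra $H$ and Boolean congruence $\Theta$ (using Sendlewski's construction \eqref{Eq:Nelson-construction}); filter $H$ through the finitely many subformulas of $\alpha$ to get a finite Heyting quotient carrying an induced Boolean congruence; and check that $\alpha$ is still refuted in the resulting finite Nelson algebra. Once a finite refuting Nelson algebra is in hand, the remainder — invoking \cite{JarRad} and checking that an isomorphic copy of a finite $\mathbb{IRS}_R(U)$ counts as a ``finite rough set-based Nelson algebra determined by a quasiorder'' — is bookkeeping.
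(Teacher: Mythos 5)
Your proposal is correct and follows essentially the same route as the paper: soundness over all Nelson algebras for (a)$\Rightarrow$(b), and for (b)$\Rightarrow$(a) the contrapositive via the finite model property of CLSN (which the paper, like you, simply cites --- to \cite{Vaka77}) combined with the representation theorem of \cite{JarRad} applied to a finite, hence algebraic, refuting Nelson algebra. Your optional sketch of a self-contained filtration argument for the finite model property goes beyond what the paper does, but the core argument is the same.
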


\begin{proof} Suppose that $\alpha$ is a theorem. Then, in the view of the completeness theorem
 proved in H.~Rasiowa \cite{Rasiowa74}, $\alpha$ is valid in every Nelson algebra. Particularly,
 $\alpha$ is valid in every finite rough set-based Nelson algebra determined by a quasiorder.

 Conversely, assume $\alpha$ is not a theorem. Then, there exists a finite
 Nelson algebra $\mathbb{A}$ such that $\alpha$ is not valid in that algebra,
 that is, its valuation $\llbracket \alpha \rrbracket_\mathbb{A}$ is different from $1_\mathbb{A}$ 
 (see \cite{Vaka77}, for example).  Because $\mathbb{A}$ is finite, it is defined on an algebraic lattice. Therefore, there
 exists a finite set $U$ and a quasiorder $R$ on $U$ such that $\mathbb{A}$ and the finite rough set-based Nelson algebra
 $\mathbb{IRS}_R(U)$ determined by $R$ are isomorphic. We denote here $\mathbb{IRS}_R(U)$ simply by $\mathbb{IRS}$.
Let us denote by $f$ the isomorphism between these
 Nelson algebras. The valuation on $\mathbb{IRS}$ can be now defined as
 $\llbracket \beta \rrbracket_{\mathbb{IRS}} =  f(\llbracket  \beta \rrbracket_\mathbb{A})$
 for all formulas $\beta$, so $\llbracket \alpha \rrbracket_{\mathbb{IRS}}
 = f(\llbracket \alpha \rrbracket_\mathbb{A}) \neq f(1_\mathbb{A}) = 1_{\mathbb{IRS}}$, that is,
 $\alpha$ is not valid in $\mathbb{IRS}$.
\end{proof}

An element $x$ of a complete lattice $L$ is \emph{completely join-irreducible}
if for every subset $X$ of $L$, $x = \bigvee X$ implies that $x \in X$.
The set of completely join-irreducible elements of $L$ is
denoted by $\mathcal{J}$. It is shown in \cite{JRV09}
that the set of completely join-irreducible elements of $\mathit{IRS}_{R}(U)$ is
\[
\mathcal{J} = \{(\emptyset,\{x\}^{R}) \mid  x \in U \text{ and } |R(x)| \geq2 \} \cup\{( R(x),
R(x)^{R}) \mid x \in U\},
\]
and that every element can be represented as a join of elements in
$\mathcal{J}$. 

In a Nelson algebra $\mathbb{A}$ defined on an algebraic lattice $(A,\leq)$,
each element is the join of completely join-irreducible elements $\mathcal{J}$.
We may define for any $j \in \mathcal{J}$ the element 
$g(j) = \bigwedge \{ x \in A \mid x \nleq {\sim} j \} \ (\in \mathcal{J})$,
and it is shown in \cite{JarRad} that the map $g \colon \mathcal{J} \to \mathcal{J}$
satisfies the following conditions for all $x,y \in \mathcal{J}$:
\begin{enumerate}[({J}1)]
 \item if $x \leq y$, then $g(y) \leq g(x)$,
 \item $g(g(x)) = x$,
 \item $x \leq g(x)$ or $g(x) \leq x$,
 \item if $x,y \leq g(x),g(y)$, there exists $z\in \mathcal{J}$ such that $x,y \leq z \leq g(x),g(y)$.
\end{enumerate}
Conversely, the mapping $g$ determines the strong negation $\sim$ on $\mathbb{A}$ by the equation
${\sim} x = \bigvee \{ j \in \mathcal{J} \mid g(j) \nleq x\}$.

Let $R$ be a quasiorder on $U$ and let $\mathcal{J}$ be the set of the completely join-irreducible
elements of $\mathit{IRS}_R(U)$. In \cite{JarRad} it is proved 
that the following equations hold:
\begin{align*}
\{ j \in \mathcal{J} \mid j < g(j) \} & = \{(\emptyset,\{x\}^R) \mid x \in U \text{ and } |R(x)| \geq 2 \}, \\
\{ j \in \mathcal{J} \mid j = g(j) \} & =  \{( \{x\}, \{x\}^R) \mid \ x \in S \, \}, \\
\{ j \in \mathcal{J} \mid j > g(j) \} & = \{( R(x), R(x)^R) \mid  x \in U \text{ and } |R(x)| \geq 2 \}.
\end{align*}
Therefore, elements in $S$ have a special role, because they are such that the completely
join-irreducible elements corresponding to them are the fixed points of $g$.
It should be also noted that in case of an equivalence $E$, the partially ordered set of completely 
join-irreducible elements of  $\mathit{IRS}_E(U)$ consists of disjoint chains of 1 and 2 elements.

Differently from an equivalence $E$ that defines one  complete field of sets $\mathcal{B}_E(U)$, 
a quasiorder $R$ determines two \emph{complete rings of sets}, or equivalently, two
\emph{Alexandrov topologies}
\[
\mathcal{T}_{R}(U) = \{ X_{R} \mid X \subseteq U\} \text{ \ and \ }
\mathcal{T}^{R}(U) = \{ X^{R} \mid X \subseteq U\}, 
\]
that is, $\mathcal{T}_{R}(U)$ and $\mathcal{T}^{R}(U)$ are closed under arbitrary
unions and intersections. Note that $\mathcal{T}_{R}(U)$ and $\mathcal{T}^{R}(U)$ 
are intended to be the open sets of these topologies, respectively. The Alexandrov
topologies $\mathcal{T}_{R}(U)$ and $\mathcal{T}^{R}(U)$ are dual in the sense that 
$X \in\mathcal{T}_{R}(U)$ if and only if $-X \in\mathcal{T}^{R}(U)$. 

The topology $\mathcal{T}_{R}(U)$ consists of all $R$-closed sets. 
Therefore, for any $X \subseteq U$, the $R$-neighbourhood
$R(X)$ of $X$ is the smallest open set containing $X$. This actually means that
$\mathcal{T}_{R}(U) = \{ R(X) \mid X \subseteq U\}$, which also implies
$R(X)_R = R(X)$ and $R(X_R) = X_R$ for any $X \subseteq U$. 
In addition, for all $X \in \mathcal{T}_R(U)$, $X = \bigcup_{x \in X} R(x)$
(see \cite{Jarv07}, for instance).

Because the points of $S$ are $R$-closed, $\wp(S)$ is a complete sublattice of 
$\mathcal{T}_{R}(U)$, as in case of equivalences. Again, each object in $S$ can 
be separated from other points of $U$ by the information provided by the 
relation $R$, because each element of $S$ is $R$-related only to itself, and
for any $x \in S$ and $X \subseteq U$, $x \in X_R$ if and only if $x \in X^R$. 
Therefore, also in case of quasiorders, $S$ can be viewed as the set of
\emph{completely defined objects}.

In the Alexandrov topology $\mathcal{T}_{R}(U)$, the map $(\cdot)^{R} \colon\wp(U) \to\wp(U)$ is the 
closure operator and $(\cdot)_{R} \colon\wp(U) \to\wp(U)$ is the interior operator. Because
$\mathcal{T}_{R}(U)$ is closed under arbitrary unions and intersections, it is a
completely distributive lattice and a double Heyting algebra. In particular,
for any $X,Y \in \mathcal{T}_{R}(U)$, the relative pseudocomplement 
$X \Rightarrow Y$ equals $(-X \cup Y)_{R}$. Thus, the structure
\begin{equation}\label{Eq:Heyting}
(\mathcal{T}_{R}(U),\cup,\cap,\Rightarrow,\emptyset,U)
\end{equation}
forms a Heyting algebra in which $X^* = (-X)_R = -X^R$. Hence, an
element $X \in \mathcal{T}_{R}(U)$ is dense if and only if $X^R = U$, meaning that $X$ is 
\emph{cofinal} in $U$, that is, for any $x \in U$, there exists $y \in X$ such that 
$x \, R \, y$ (see \cite{Stone68} for more details on cofinal sets).

Because each increasing rough set pair belongs to $\mathcal{T}_{R}(U) \times\mathcal{T}^{R}(U)$,
our next aim is to present a characterization of $\mathit{IRS}_{R}(U)$
in terms of pairs belonging to $\mathcal{T}_{R}(U) \times\mathcal{T}^{R}(U)$.
The next proposition appeared for the first time in \cite{JarPagRad11}, and also an analogous result is
presented independently in \cite{NagaUma11}.

\begin{proposition}\label{Prop:characterization}
Let $R$ be a quasiorder on $U$. Then,
\[
\mathit{IRS}_R(U) = \{(A,B) \in \mathcal{T}_{R}(U) \times\mathcal{T}^{R}(U)
\mid A \subseteq B \text{ and } S \subseteq A \cup-B \}.
\]
\end{proposition}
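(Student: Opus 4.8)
The plan is to prove the two set inclusions separately; the inclusion ``$\subseteq$'' is routine, and ``$\supseteq$'' is where the condition $S \subseteq A \cup -B$ actually gets used.

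For ``$\subseteq$'', I would take any $X \subseteq U$ and verify the three conditions for $(X_R, X^R)$. That $X_R \in \mathcal{T}_R(U)$ and $X^R \in \mathcal{T}^R(U)$ is immediate from the descriptions of these Alexandrov topologies, and $X_R \subseteq X \subseteq X^R$ holds by reflexivity of $R$. Only $S \subseteq X_R \cup -X^R$ needs a remark: if $s \in S$, then $R(s) = \{s\}$, so $s \in X^R$ forces $s \in X$, hence $R(s) = \{s\} \subseteq X$ and $s \in X_R$; thus $s \in X_R$ whenever $s \notin -X^R$.

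For ``$\supseteq$'', let $(A,B)$ lie in the right-hand side. Rather than exhibiting an explicit $X$ with $X_R = A$ and $X^R = B$ — which is delicate, since one would have to split the part of $B \setminus A$ whose entire $R$-successor set remains inside $B \setminus A$ — I would use that $\mathit{IRS}_R(U)$ is a complete sublattice of $\wp(U) \times \wp(U)$ with componentwise joins, and realise $(A,B)$ as a join of members already known to lie in $\mathit{IRS}_R(U)$. Since $A$ is open in $\mathcal{T}_R(U)$ we have $A_R = A$, so $(A, A^R) = (A_R, A^R) \in \mathit{IRS}_R(U)$; and for each $y$ with $|R(y)| \geq 2$ the pair $(\emptyset, \{y\}^R)$ belongs to $\mathcal{J} \subseteq \mathit{IRS}_R(U)$. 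The claim is that
\[
(A,B) = (A, A^R) \vee \bigvee_{y \in B \setminus A^R} (\emptyset, \{y\}^R).
\]
Computing the join componentwise, the first coordinate equals $A$. For the second, $A \subseteq B$ together with $B = B^R$ (the members of $\mathcal{T}^R(U)$ are the closed sets of $\mathcal{T}_R(U)$) gives $A^R \subseteq B$; and since $\{y\}^R = \{z \in U \mid z \, R \, y\}$ while members of $\mathcal{T}^R(U)$, being complements of $R$-closed sets, are closed under $R$-predecessors, we get $\{y\}^R \subseteq B$ whenever $y \in B$. Hence $A^R \cup \bigcup_{y \in B \setminus A^R}\{y\}^R$ is contained in $B$ and contains $A^R \cup (B \setminus A^R) = B$ (because $y \in \{y\}^R$), so it equals $B$.

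The step I expect to be the crux is justifying that this index set is admissible, that is, that $|R(y)| \geq 2$ for every $y \in B \setminus A^R$ — and this is precisely the role of $S \subseteq A \cup -B$: if $R(y) = \{y\}$, then $y \in S$, so $y \in A \cup -B$; but $y \in B$, whence $y \in A \subseteq A^R$, contradicting $y \notin A^R$. With this established, the right-hand side of the display is a join of elements of $\mathit{IRS}_R(U)$ and therefore lies in $\mathit{IRS}_R(U)$, giving ``$\supseteq$''. (One could equally present the join through the generating set $\mathcal{J}$, writing $(A,A^R) = \bigvee_{x \in A}(R(x), R(x)^R)$, but the bare complete-sublattice property already suffices.)
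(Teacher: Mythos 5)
Your proof is correct and follows essentially the same strategy as the paper: both directions match, and for ``$\supseteq$'' the paper likewise realises $(A,B)$ as a componentwise join of elements already known to lie in $\mathit{IRS}_R(U)$, using $S\subseteq A\cup -B$ to guarantee $|R(y)|\geq 2$ on the relevant index set. The only (harmless) differences are that the paper writes the first summand as $\bigvee_{x\in A}(R(x),R(x)^R)$ rather than $(A,A^R)$ and indexes the singleton pairs over $B\setminus A$ rather than $B\setminus A^R$.
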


\begin{proof} ($\subseteq$): Suppose $(X_R,X^R) \in \mathit{IRS}_R(U)$. Then, $X_R \subseteq X^R$.
Suppose now $x \in S$ and $x \notin X_R \cup -X^R$. Then $x \in X^R \setminus X_R$,
which is clearly impossible because $x \in S$. Thus,
$S \subseteq  X_R \cup -X^R$.

\medskip\noindent%
($\supseteq$): Assume that $(A,B)  \in \mathcal{T}_R(U) \times \mathcal{T}^R(U)$,
$A \subseteq B$, and  $S \subseteq A \cup -B$,
This means that $B \setminus A \subseteq -S$, that is, for any $x \in B \setminus A$, we have $|R(x)| \geq 2$.
For any $x \in B \setminus A$,
the pair $(\emptyset, \{x\}^R)$ is a rough set, because  $|R(x)| \geq 2$. Additionally, for any
$x \in A$, the pair $( R(x), R(x)^R)$ is also a rough set.
Let us consider the rough set
\begin{align*}
(C,D) &= \bigvee \{ (\emptyset,\{x\}^R) \mid x \in B \setminus A \} \vee \bigvee  \{( R(x), R(x)^R) \mid x \in A\} \\
& = \Big ( \bigcup_{x \in A} R(x), \bigcup \{ \{x\}^R \mid x \in B \setminus A \}
\cup \bigcup \{ R(x)^R \mid x \in A \} \Big).
\end{align*}
Clearly, $C = \bigcup_{x \in A} R(x) = A$ since $A \in  \mathcal{T}_R(U)$. In turn,
\[
D =  \bigcup \{ \{x\}^R \mid x \in B \setminus A \} \cup \bigcup \{ R(x)^R \mid x \in A \}.
\]
Now, in view of the fact that $A$ is $R$-closed,
and that $B$ is an upper approximation, hence $B^R=B$, we have:
\begin{enumerate}[(i)]
\item If $x \in A$, then $R(x) \subseteq A$, so $R(x)^R \subseteq A^R \subseteq B^R = B$.
\item If $x \in B \setminus A$, then  $\{x\}^R \subseteq B^R = B$.
\end{enumerate}
Therefore, $D \subseteq B$. Conversely, let $y \in B$. Then, $y \in A$ or $y \in B \setminus A$.
\begin{enumerate}[(i)]
\item If $y \in A$, then $y \in R(y)^R \subseteq D$.
\item If $y \in B \setminus A$, then $y \in \{y\}^R$ and
$y \in \bigcup \{ \{x\}^R \mid x \in B \setminus A \} \subseteq D$.
\end{enumerate}
Thus, we have shown $B = D$. Therefore, $(A,B) = (C,D)$ is a rough set, that is, $(A,B) \in \mathit{IRS}_R(U)$.
\end{proof}

As in the case of equivalences, it is obvious by Proposition~\ref{Prop:characterization} that
\begin{equation}\label{Eq:R-disjoint}
\mathit{DRS}_R(U) = \{ (A,B) \in \mathcal{T}_{R}(U) \times\mathcal{T}_{R}(U)
\mid A \cap B = \emptyset\text{ and } S \subseteq A \cup B \}.
\end{equation}
We can now connect rough sets defined by quasiorders to Sendlewski's construction
\eqref{Eq:Nelson-construction}. First, we need the following lemma.
\begin{lemma} \label{Lem:S=Dense}
The set $S$ is included in all dense elements of $\mathcal{T}_{R}(U)$.
\end{lemma}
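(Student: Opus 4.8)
The plan is simply to combine the two descriptions established immediately before the lemma. On the one hand, by the Heyting algebra structure \eqref{Eq:Heyting} and the remark following it, an element $X \in \mathcal{T}_{R}(U)$ is dense if and only if $X^{R} = U$, i.e.\ $X$ is cofinal in $U$: for every $x \in U$ there exists $y \in X$ with $x \mathrel{R} y$. On the other hand, by Definition~\ref{Def:BasicNotation}, a point $x$ lies in $S$ precisely when $\{x\}$ is $R$-closed, that is, $R(x) = \{x\}$, so $x$ is $R$-related only to itself.

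Given this, the argument is a one-line unwinding. Let $X \in \mathcal{T}_{R}(U)$ be dense and let $x \in S$. By cofinality of $X$ there is some $y \in X$ with $x \mathrel{R} y$, i.e.\ $y \in R(x)$. Since $x \in S$ we have $R(x) = \{x\}$, whence $y = x$, and therefore $x = y \in X$. As $x \in S$ was arbitrary, $S \subseteq X$, which is exactly the claim.

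I expect no genuine obstacle here: the statement follows directly from the characterisations of dense sets and of $R$-closed points, and in fact does not even use that $X$ is open, only that it is cofinal. Its purpose is to supply the hypothesis of Lemma~\ref{Lem:BooleanCongruencePseudo} with $a := S$, so that $\cong_{S}$ becomes a Boolean congruence on the Heyting algebra $\mathcal{T}_{R}(U)$ and, in view of \eqref{Eq:R-disjoint}, Sendlewski's construction \eqref{Eq:Nelson-construction} can be applied to realise $\mathit{DRS}_R(U)$ as a Nelson algebra.
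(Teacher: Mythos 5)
Your proof is correct and is essentially the paper's own argument: both unwind density as $X^R = U$ (cofinality) and use $R(x) = \{x\}$ for $x \in S$ to conclude $x \in X$; the paper merely phrases it as a proof by contradiction. No issues.
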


\begin{proof} Suppose that the set $X \in \mathcal{T}_{R}(U)$ is dense, that is,
$X^R = U$. If $S \nsubseteq X$, then there exists $x \in S$ such that $x \notin X$.
Because $x \in X^R$ and $R(x) = \{x\}$, we have $x \in X$, a contradiction.
\end{proof}

Because $\mathcal{T}_{R}(U)$ forms a Heyting algebra \eqref{Eq:Heyting},
by the previous lemma and Lemma~\ref{Lem:BooleanCongruencePseudo},
$\cong_S$ is a Boolean congruence on the Heyting algebra $ \mathcal{T}_{R}(U)$. 
It is easy to see that for all $X \in \mathcal{T}_{R}(U)$, 
$X \cong_S U$ if and only if $S \subseteq X$. Therefore, by \eqref{Eq:R-disjoint}, we may write
\[
\mathit{DRS}_R(U) = N_{\cong_S}(\mathcal{T}_{R}(U)).
\]
By applying Sendlewski's construction \eqref{Eq:Nelson-construction}, we may now write the 
following proposition.

\begin{proposition}\label{Prop:DRS_Sendlewski}
If $R$ is a quasiorder on $U$, then $\mathit{DRS}_R(U)$ forms a Nelson algebra
with the operations:
\begin{align*}
 (X_R,-X^R) \vee  (Y_R,-Y^R)   & = (X_R \cup Y_R, -X^R \cap -Y^R): \\
 (X_R,-X^R) \wedge (Y_R,-Y^R) & = (X_R \cap Y_R, -X^R \cup -Y^R); \\
 {\sim}(X_R,-X^R) &= (-X^R, X_R);\\
  (X_R,-X^R)  \to  (Y_R,-Y^R)  &= ( (-X_R \cup Y_R)_R, X_R \cap -Y^R).
\end{align*}
\end{proposition}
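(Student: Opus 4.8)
The plan is to derive Proposition~\ref{Prop:DRS_Sendlewski} as a direct instance of Sendlewski's construction \eqref{Eq:Nelson-construction}, applied to the Heyting algebra $\mathcal{T}_{R}(U)$ of \eqref{Eq:Heyting} and the Boolean congruence $\cong_S$. The groundwork is essentially complete: Lemma~\ref{Lem:S=Dense} together with Lemma~\ref{Lem:BooleanCongruencePseudo} shows that $\cong_S$ is a Boolean congruence on $\mathcal{T}_{R}(U)$, and the remark following Lemma~\ref{Lem:S=Dense} identifies $\mathit{DRS}_R(U)$ with the carrier $N_{\cong_S}(\mathcal{T}_{R}(U))$ via equation \eqref{Eq:R-disjoint}. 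Hence Sendlewski's theorem immediately yields that $\mathbb{N}_{\cong_S}(\mathcal{T}_{R}(U))$ is a Nelson algebra whose underlying set is $\mathit{DRS}_R(U)$. What remains is purely to rewrite the four abstract operation formulas of \eqref{Eq:Nelson-construction} under the identification $(a,b) = (X_R, -X^R)$, i.e. with $a = X_R$ and $b = -X^R$, using the concrete Heyting operations of $\mathcal{T}_{R}(U)$: binary join and meet are $\cup$ and $\cap$, and the relative pseudocomplement is $X \Rightarrow Y = (-X \cup Y)_R$.

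First I would recall, from the sentence preceding Proposition~\ref{Prop:DRS_Sendlewski}, that $\cong_S$ is a Boolean congruence on $\mathcal{T}_{R}(U)$ and that $\mathit{DRS}_R(U) = N_{\cong_S}(\mathcal{T}_{R}(U))$, so Sendlewski's construction applies verbatim and endows $\mathit{DRS}_R(U)$ with a Nelson algebra structure. Then I would substitute into each of the four defining equations. For join: $(X_R,-X^R) \vee (Y_R,-Y^R) = (X_R \cup Y_R,\, -X^R \cap -Y^R)$, directly from $(a,b)\vee(c,d) = (a\vee c, b\wedge d)$ with the meet in $\mathcal{T}_{R}(U)$ being ordinary intersection. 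Dually for meet. For strong negation, $\sim(a,b) = (b,a)$ gives ${\sim}(X_R,-X^R) = (-X^R, X_R)$ at once. For the implication, $(a,b)\to(c,d) = (a \Rightarrow c,\, a \wedge d)$ becomes $(X_R,-X^R)\to(Y_R,-Y^R) = \big((-X_R \cup Y_R)_R,\, X_R \cap -Y^R\big)$, using $X_R \Rightarrow Y_R = (-X_R \cup Y_R)_R$ in the Heyting algebra $\mathcal{T}_{R}(U)$ and the second component $X_R \wedge (-Y^R) = X_R \cap -Y^R$. These four substitutions are exactly the claimed formulas.

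There is no real obstacle here; the statement is a translation of an already-established abstract fact into concrete rough-set notation. The only point deserving a word of care is making sure the identification of the second component is the right one: in \eqref{Eq:Nelson-construction} a pair is $(a,b)$ with $a\wedge b = 0$, and in $\mathit{DRS}_R(U)$ the disjoint representation is $(X_R, -X^R)$, so one must confirm $X_R \cap -X^R = \emptyset$, which is immediate from $X_R \subseteq X^R$. With that noted, each operation formula follows by plugging $a = X_R$, $b = -X^R$, $c = Y_R$, $d = -Y^R$ into the corresponding line of the Sendlewski definition and simplifying using the Heyting operations of $\mathcal{T}_{R}(U)$; I would present this as a short verification rather than a computation, since every ingredient has already been recorded in the excerpt.
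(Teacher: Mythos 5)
Your proposal is correct and follows exactly the route the paper takes: the text preceding the proposition establishes that $\cong_S$ is a Boolean congruence on the Heyting algebra $\mathcal{T}_R(U)$ and that $\mathit{DRS}_R(U) = N_{\cong_S}(\mathcal{T}_R(U))$, and the proposition is then obtained by instantiating Sendlewski's operation formulas with $a = X_R$, $b = -X^R$ and the concrete Heyting operations (in particular $X \Rightarrow Y = (-X \cup Y)_R$). Your additional check that $X_R \cap -X^R = \emptyset$ is a reasonable point of care but adds nothing beyond what the paper already records.
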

We denote this Nelson algebra on $\mathit{DRS}_R(U)$ by $\mathbb{DRS}_R(U)$.
Because the map $(X_R,X^R) \mapsto (X_R,-X^R)$ is an order-isomorphism between complete lattices
$\mathit{IRS}_R(U)$ and $\mathit{DRS}_R(U)$, we may write the following corollary
describing the operations in the Nelson algebra $\mathbb{IRS}_R(U)$

\begin{corollary}\label{Cor:IRS_Sendlewski}
For a quasiorder $R$ on $U$, the operations of $\mathbb{IRS}_R(U)$ are: 
\begin{align*}
 (X_R,X^R) \vee  (Y_R,Y^R)   & = (X_R \cup Y_R, X^R \cup Y^R); \\
 (X_R,X^R) \wedge (Y_R,Y^R) & = (X_R \cap Y_R, X^R \cap Y^R); \\
 {\sim}(X_R,X^R) &= (-X^R, -X_R);\\
  (X_R,X^R)  \to  (Y_R,Y^R)  &= ( (-X_R \cup Y_R)_R, -X_R \cup Y^R). 
\end{align*}
\end{corollary}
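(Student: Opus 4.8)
The plan is to transport the Nelson-algebra structure of $\mathbb{DRS}_R(U)$, described explicitly in Proposition~\ref{Prop:DRS_Sendlewski}, along the order-isomorphism $\varphi \colon \mathit{IRS}_R(U) \to \mathit{DRS}_R(U)$ given by $\varphi(X_R,X^R) = (X_R,-X^R)$, already noted before Proposition~\ref{Prop:characterization}. First I would observe that $\varphi$, being an order-isomorphism between lattices, is in particular a lattice isomorphism, and that it carries the strong negation of $\mathbb{IRS}_R(U)$ to that of $\mathbb{DRS}_R(U)$: indeed $\varphi\bigl({\sim}(X_R,X^R)\bigr) = \varphi(-X^R,-X_R) = (-X^R,X_R) = {\sim}(X_R,-X^R) = {\sim}\varphi(X_R,X^R)$. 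Since in every Nelson algebra the weak relative pseudocomplement $\to$ is uniquely determined by the lattice order together with $\sim$ through axiom (N1) (namely, $a \to b$ is the largest $c$ with $a \wedge c \leq {\sim}a \vee b$), any lattice isomorphism between Nelson algebras that preserves $\sim$ also preserves $\to$; hence $\varphi$ is an isomorphism of the Nelson algebras $\mathbb{IRS}_R(U)$ and $\mathbb{DRS}_R(U)$.

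It then follows that each operation on $\mathbb{IRS}_R(U)$ is the $\varphi$-conjugate of the corresponding operation on $\mathbb{DRS}_R(U)$: writing $\varphi^{-1}(A,B) = (A,-B)$, one has ${\sim}p = \varphi^{-1}\bigl({\sim}\varphi(p)\bigr)$ and $p \star q = \varphi^{-1}\bigl(\varphi(p) \star \varphi(q)\bigr)$ for $\star \in \{\vee,\wedge,\to\}$. Substituting the formulas of Proposition~\ref{Prop:DRS_Sendlewski} and simplifying with De Morgan's laws — $-X^R \cap -Y^R = -(X^R \cup Y^R)$, $-X^R \cup -Y^R = -(X^R \cap Y^R)$, and $-(X_R \cap -Y^R) = -X_R \cup Y^R$ — produces the four identities of the statement. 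For example, $\varphi(X_R,X^R) \vee \varphi(Y_R,Y^R) = (X_R \cup Y_R,\, -X^R \cap -Y^R) = (X_R \cup Y_R,\, -(X^R \cup Y^R))$, so applying $\varphi^{-1}$ gives $(X_R \cup Y_R, X^R \cup Y^R)$; the cases of $\wedge$, $\sim$, and $\to$ are handled in the same way. The formulas for $\vee$ and $\wedge$ also agree with the pointwise lattice operations on $\mathit{IRS}_R(U)$ recorded earlier, which serves as a consistency check.

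The only real work is the bookkeeping with set complements, which is routine, so I expect no genuine obstacle. If one prefers to avoid invoking the uniqueness of $\to$, an alternative is simply to define the operations of $\mathbb{IRS}_R(U)$ to be the $\varphi$-transports of those of $\mathbb{DRS}_R(U)$ from the outset, in which case the corollary is literally the computation above.
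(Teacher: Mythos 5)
Your proposal is correct and follows exactly the paper's route: the paper likewise derives the corollary by transporting the operations of Proposition~\ref{Prop:DRS_Sendlewski} along the order-isomorphism $(X_R,X^R)\mapsto(X_R,-X^R)$, merely leaving the De Morgan bookkeeping implicit. Your additional observation that a $\sim$-preserving lattice isomorphism automatically preserves $\to$ (via the uniqueness in (N1)) is a worthwhile explicit justification of a step the paper takes for granted.
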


We are now ready to consider effective lattices determined by rough sets.
Recall that for any Heyting algebra $H$, the corresponding effective lattice is
$\mathbb{N}_\Gamma (H)$, where $\Gamma$ is the Glivenko congruence on $H$. In Section~\ref{Sec:Preliminary}
we showed that every element $a \in H$ which is below all dense elements $D$ determines a Boolean
congruence $\cong_a$. If such an $a$ is itself a dense element, it must
be the least dense element, that is, $a = \bigwedge D \in D$. 
Therefore, in this case $\Gamma$ is equal both to $\cong_a$ and to the congruence $\theta(F_a)$ 
of the principal filter $F_a = \{x \in H \mid a \leq x\} = D$. \label{Page:Dense}

It should be noted that Heyting algebras do not necessarily have a least dense element. For instance, the
Heyting algebra defined on the real interval $[0,1] = \{ x \in \mathbb{R} \mid 0 \leq x \leq 1\}$
is such, because each non-zero element of the algebra is dense. On the contrary, in case of finite Heyting
algebras, there exists always the least dense element $\bigwedge D$, and thus $D$ is
the principal filter of $\bigwedge D$.

By definition, $\mathbb{DRS}_R(U)$ is an effective lattice whenever $\cong_S$ 
is the least Boolean congruence on the Heyting algebra $\mathcal{T}_R(U)$. Because the
Nelson algebras $\mathbb{DRS}_R(U)$ and $\mathbb{IRS}_R(U)$ are essentially the same, 
we say that also $\mathbb{IRS}_R(U)$ is an effective lattice, if $\cong_S$ is the 
Glivenko congruence of  $\mathcal{T}_R(U)$.

Our next lemma characterizes the conditions under which rough set-based Nelson algebras determined 
by quasiordes are effective lattices. 

\begin{proposition} \label{Prop:Dense}
Let $R$ be a quasiorder on the set $U$ and let $S$ be the set of $R$-closed elements.
The following statements are equivalent:
\begin{enumerate}[\rm (a)]
\item $S$ is cofinal in $U$,
\item $S$ is a dense element of the Heyting algebra $\mathcal{T}_{R}(U)$,
\item $S$ is the least dense element of the Heyting algebra $\mathcal{T}_{R}(U)$,
\item $\cong_S$ is the least Boolean congruence $\Gamma$ on the Heyting algebra $\mathcal{T}_{R}(U)$,
\item $\mathbb{IRS}_R(U)$ and $\mathbb{DRS}_R(U)$ are effective lattices.
\end{enumerate}
\end{proposition}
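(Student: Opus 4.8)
\subsection*{Proof proposal}

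The plan is to establish the four equivalences (a) $\Leftrightarrow$ (b), (b) $\Leftrightarrow$ (c), (b) $\Leftrightarrow$ (d), and (d) $\Leftrightarrow$ (e), in each case merely unwinding definitions and invoking the lemmas already proved; together these give the equivalence of all five statements. The implication (a) $\Leftrightarrow$ (b) is immediate: since every point of $S$ is $R$-closed we have $S = R(S) \in \mathcal{T}_R(U)$, so the pseudocomplement formula $S^* = -S^R$ applies, whence $S$ is dense exactly when $S^* = \emptyset$, i.e. when $S^R = U$, which by the characterization of dense elements of $\mathcal{T}_R(U)$ recalled before the proposition is precisely the cofinality of $S$ in $U$.

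For (b) $\Leftrightarrow$ (c), the direction (c) $\Rightarrow$ (b) is trivial, and (b) $\Rightarrow$ (c) follows by combining Lemma~\ref{Lem:S=Dense}, which gives $S \subseteq X$ for every dense $X$, with the hypothesis that $S$ is itself dense: then $S$ is the smallest dense element. For (b) $\Leftrightarrow$ (d), recall from the proof of Lemma~\ref{Lem:BooleanCongruencePseudo} that $\cong_S$ coincides with $\theta(F_S)$, where $F_S = \{X \in \mathcal{T}_R(U) \mid S \subseteq X\}$ is the principal filter generated by $S$ (note $S \in \mathcal{T}_R(U)$), while the Glivenko congruence is $\Gamma = \theta(D)$, with $D$ the filter of dense elements. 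On a Heyting algebra the assignment $F \mapsto \theta(F)$ is a bijection from filters onto congruences, with inverse $\Theta \mapsto \{x \mid (x,1) \in \Theta\}$; in particular $\theta(F_S) = \theta(D)$ if and only if $F_S = D$. Since Lemma~\ref{Lem:S=Dense} already gives $D \subseteq F_S$ unconditionally, we have $F_S = D$ if and only if $S \in D$, i.e. if and only if $S$ is dense. This yields (b) $\Leftrightarrow$ (d), and hence also (c) $\Leftrightarrow$ (d).

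Finally, (d) $\Leftrightarrow$ (e) is essentially the definition of an effective lattice specialized to $\mathcal{T}_R(U)$: we have already identified $\mathbb{DRS}_R(U)$ with $\mathbb{N}_{\cong_S}(\mathcal{T}_R(U))$, and this Nelson algebra is the effective lattice $\mathbb{N}_{\Gamma}(\mathcal{T}_R(U))$ exactly when $\cong_S = \Gamma$; since $\mathbb{IRS}_R(U)$ and $\mathbb{DRS}_R(U)$ are isomorphic Nelson algebras, the same equivalence transfers to $\mathbb{IRS}_R(U)$. The only step that requires genuine care is (b) $\Leftrightarrow$ (d): it hinges on the standard congruence--filter correspondence for Heyting algebras, which is what guarantees that equality of the induced congruences $\theta(F_S)$ and $\theta(D)$ forces equality of the underlying filters $F_S$ and $D$; every other step is a direct reading of the definitions and the preceding lemmas.
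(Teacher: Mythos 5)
Your proof is correct and follows essentially the same route as the paper: (a)$\Leftrightarrow$(b) and (d)$\Leftrightarrow$(e) are read off from the definitions, (b)$\Leftrightarrow$(c) comes from Lemma~\ref{Lem:S=Dense}, and the link to (d) goes through the identifications ${\cong_S} = \theta(F_S)$ and $\Gamma = \theta(D)$. The only cosmetic difference is in (d)$\Rightarrow$(b), where you invoke the injectivity of the filter--congruence correspondence $F \mapsto \theta(F)$ (a standard fact, though the paper only explicitly records surjectivity), whereas the paper argues more directly that ${\cong_S} = \Gamma$ together with $S \cong_S U$ forces $S \mathrel{\Gamma} U$, hence $S^* = U^* = \emptyset$ and $S$ is dense.
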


\begin{proof} Claims (a) and (b) are equivalent by definition, and the same holds
between (d) and (e). Trivially (c) implies (b), and by Lemma~\ref{Lem:S=Dense},
$S$ is included in each dense element of $\mathcal{T}_R(U)$, hence (b) implies (c).

If\, $\cong_S$ equals $\Gamma$, then $S \cong_S U$ implies 
$S \, \Gamma \, U$ and $X^* = U^* = \emptyset$, that is, $X$ is dense and 
(d)$\Rightarrow$(b).  If $S$ is the least dense set, then, as discussed earlier, 
${\cong_S}$ equals $\Gamma$ and (c)$\Rightarrow$(d).
\end{proof}

If ${\cong_S}$ is the Glivenko congruence, then for all elements $A,B \in \mathcal{T}_R(U)$, 
$A \cup B \cong_S U \iff A \cup B \text{ is dense } \iff (A \cup B)^R = 
A^R \cup B^R = U$. Therefore, we can write the following corollary
characterizing the elements of $\mathbb{DRS}_R(U)$ and $\mathbb{IRS}_R(U)$ 
in the case they are effective lattices.

\begin{corollary} Let $R$ be a quasiorder on $U$ and assume that $S$ is dense.
Then, the following equations hold:
\begin{enumerate}[\rm (a)]
 \item $\mathit{DRS}_R(U) = \{ (A,B) \in \mathcal{T}_{R}(U) \times\mathcal{T}_{R}(U)
\mid A \cap B = \emptyset\text{ and } A^R \cup B^R = U \}$;
\item $\mathit{IRS}_R(U) = \{(A,B) \in \mathcal{T}_{R}(U) \times\mathcal{T}^{R}(U)
\mid A \subseteq B \text{ and } B_R \setminus A^R = \emptyset \}$.
\end{enumerate}
\end{corollary}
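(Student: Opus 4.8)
The plan is to simply rewrite the two conditions from Proposition~\ref{Prop:characterization} and equation~\eqref{Eq:R-disjoint} using the fact that, under the hypothesis that $S$ is dense, the condition ``$S \subseteq Z$'' can be replaced by ``$Z$ is dense'', i.e. ``$Z^R = U$''. This is the content of the displayed equivalence preceding the corollary, which follows from Proposition~\ref{Prop:Dense}: since $S$ is dense, $\cong_S$ is the Glivenko congruence $\Gamma$, and $Z \cong_S U$ holds if and only if $Z$ is dense, if and only if $Z^R = U$ (recall from \eqref{Eq:Heyting} that $Z^* = -Z^R$, so denseness of $Z$ means $Z^R = U$).

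For part (a): start from \eqref{Eq:R-disjoint}, which states that $\mathit{DRS}_R(U)$ consists of the pairs $(A,B) \in \mathcal{T}_R(U) \times \mathcal{T}_R(U)$ with $A \cap B = \emptyset$ and $S \subseteq A \cup B$. Now $A \cup B \in \mathcal{T}_R(U)$ since $\mathcal{T}_R(U)$ is closed under unions, so the condition $S \subseteq A \cup B$ is equivalent to $A \cup B \cong_S U$, which by the above equivalence is equivalent to $(A \cup B)^R = U$. Finally, since $(\cdot)^R$ is the closure operator of an Alexandrov topology and hence distributes over unions, $(A \cup B)^R = A^R \cup B^R$, giving the stated form.

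For part (b): start from Proposition~\ref{Prop:characterization}, which gives $\mathit{IRS}_R(U)$ as the pairs $(A,B) \in \mathcal{T}_R(U) \times \mathcal{T}^R(U)$ with $A \subseteq B$ and $S \subseteq A \cup -B$. Here $-B \in \mathcal{T}_R(U)$ because $B \in \mathcal{T}^R(U)$ and the two Alexandrov topologies are dual, so $A \cup -B \in \mathcal{T}_R(U)$ and the condition $S \subseteq A \cup -B$ is again equivalent to $(A \cup -B)^R = U$. Using that $(\cdot)^R$ distributes over unions, $(A \cup -B)^R = A^R \cup (-B)^R$; since $A$ is $R$-closed we have $A^R = A$, and $(-B)^R = -(B_R)$ by the standard duality between interior and closure. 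So the condition becomes $A \cup -(B_R) = U$, which is equivalent to $B_R \setminus A = \emptyset$; and since $A = A^R$, this is the same as $B_R \setminus A^R = \emptyset$, as claimed.

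I do not anticipate a genuine obstacle here: the corollary is a routine reformulation, and the only point requiring a little care is making sure the relevant sets ($A \cup B$ in (a), $A \cup -B$ in (b)) genuinely lie in $\mathcal{T}_R(U)$ so that the ``$S \subseteq \,\cdot\,$'' criterion can legitimately be swapped for denseness in that Heyting algebra. Both closure facts needed --- that $(\cdot)^R$ commutes with unions and that $(-X)^R = -(X_R)$ --- are standard properties of Alexandrov topologies already invoked in the excerpt, so they may be used without further comment.
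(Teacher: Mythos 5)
Your overall strategy is exactly the paper's: the paper justifies this corollary solely by the displayed equivalence preceding it ($A \cup B \cong_S U \iff A\cup B$ dense $\iff (A\cup B)^R = A^R\cup B^R = U$), applied to \eqref{Eq:R-disjoint} for (a) and to Proposition~\ref{Prop:characterization} for (b), and your part (a) and the framing of part (b) carry this out correctly. However, in part (b) the step ``since $A$ is $R$-closed we have $A^R = A$'' is false. Being $R$-closed ($R(A)=A$) means $A$ is an \emph{open} set of the Alexandrov topology $\mathcal{T}_R(U)$, i.e.\ fixed by the interior operator $(\cdot)_R$, not by the closure operator $(\cdot)^R$: for $U=\{1,2\}$ with $R$ the quasiorder generated by $1\,R\,2$, the set $A=\{2\}$ satisfies $R(A)=A$ but $A^R=U\neq A$. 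You happen to invoke this false identity twice in opposite directions (first replacing $A^R$ by $A$, then replacing $A$ by $A^R$ at the end), so the two errors cancel and you land on the correct condition. The repair is simply to delete both substitutions: from $(A\cup -B)^R = A^R \cup (-B)^R = A^R \cup -(B_R)$, the condition $(A\cup -B)^R = U$ is already equivalent to $-(A^R)\cap B_R=\emptyset$, i.e.\ $B_R\setminus A^R=\emptyset$, with no appeal to $A^R=A$. With that correction the proof is complete and coincides with the paper's intended argument.
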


Next, we consider shortly the case that $R$ is a partial order.
The well-known \emph{Hausdorff maximal principle} states that
in any partially ordered set, there exists a maximal chain. 

\begin{corollary} \label{Cor:BoundenChain}
If $(U,\leq)$ is a partially  ordered set such that any maximal chain is
bounded from above, then $\mathbb{IRS}_\leq(U)$ and $\mathbb{DRS}_\leq(U)$
are effective lattices.
\end{corollary}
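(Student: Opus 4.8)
The plan is to reduce Corollary~\ref{Cor:BoundenChain} to condition~(a) of Proposition~\ref{Prop:Dense}, namely that $S$ is cofinal in $U$. Since $R = {\leq}$ is a partial order, the $R$-neighbourhood of $x$ is the principal up-set ${\uparrow}x = \{ y \in U \mid x \leq y\}$, so $x$ is an $R$-closed element precisely when ${\uparrow}x = \{x\}$, that is, when $x$ is a \emph{maximal} element of $(U,\leq)$. Hence $S$ is exactly the set of maximal elements of the poset, and ``$S$ is cofinal'' unwinds to: for every $x \in U$ there is a maximal element $y$ with $x \leq y$. Once this cofinality is established, Proposition~\ref{Prop:Dense} immediately gives that $\mathbb{IRS}_\leq(U)$ and $\mathbb{DRS}_\leq(U)$ are effective lattices, which is the desired conclusion.

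So the real content is the order-theoretic claim: \emph{if every maximal chain of $(U,\leq)$ is bounded from above, then every element lies below some maximal element.} First I would fix $x \in U$ and pass to the principal filter ${\uparrow}x$ with the induced order; a maximal element of $U$ above $x$ is the same thing as a maximal element of ${\uparrow}x$. By the Hausdorff maximal principle (invoked in the sentence preceding the corollary), ${\uparrow}x$ contains a maximal chain $C$; note $x \in C$ may be assumed, or at least $C \cup \{x\}$ is still a chain in ${\uparrow}x$, so $C$ meets ${\uparrow}x$ and we may take $C \subseteq {\uparrow}x$. I would then argue that $C$ is also a maximal chain of $U$: any chain of $U$ properly extending $C$ would have to contain elements comparable with every member of $C$, in particular with $x$, forcing those elements into ${\uparrow}x$ (or below $x$, but then they would be below all of $C$, contradicting maximality of $C$ unless already in $C$), so such an extension would already be a chain in ${\uparrow}x$ extending $C$ — impossible. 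By hypothesis $C$ has an upper bound $u \in U$; since $u$ bounds every element of $C$ and $C$ is a maximal chain, $u \in C$, and $u$ is then the greatest element of $C$. Finally $u$ is a maximal element of $U$: if $u \leq v$ with $u \neq v$, then $C \cup \{v\}$ is a strictly larger chain, contradicting maximality. Since $x \leq u$ (as $C \subseteq {\uparrow}x$), we conclude $x$ lies below the maximal element $u$, so $S$ is cofinal.

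The main obstacle is the slightly delicate point that the maximal chain supplied by Hausdorff's principle \emph{inside} ${\uparrow}x$ is genuinely a maximal chain of the whole poset $U$ (so that the ``bounded from above'' hypothesis applies to it), together with the standard but essential step that an upper bound of a maximal chain must itself belong to that chain. Both are routine once stated carefully, but they are where the argument could go wrong if one is careless; everything else — the identification $S = \{$maximal elements$\}$ and the appeal to Proposition~\ref{Prop:Dense} — is bookkeeping. One could alternatively bypass chains entirely and apply Zorn's lemma directly to ${\uparrow}x$: every chain in ${\uparrow}x$ extends to a maximal chain of $U$, which is bounded above in $U$ by an element that must lie in ${\uparrow}x$, so every chain of ${\uparrow}x$ has an upper bound in ${\uparrow}x$ and Zorn yields a maximal element of ${\uparrow}x$ above $x$; I would likely present this Zorn-based phrasing as it is cleaner.
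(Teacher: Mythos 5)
Your overall strategy is exactly the paper's: identify $S$ with the set of maximal elements of $(U,\leq)$, reduce the corollary to the cofinality of $S$ via Proposition~\ref{Prop:Dense}, and produce a maximal element above a given $x$ by applying the Hausdorff maximal principle to the principal filter ${\uparrow}x$. However, the detailed chain argument you present first contains a false step: a maximal chain $C$ of ${\uparrow}x$ need \emph{not} be a maximal chain of $U$. Take $U=\{0,a,b\}$ with $0<a$, $0<b$ and $a,b$ incomparable, and $x=a$; then ${\uparrow}a=\{a\}$, its unique maximal chain is $\{a\}$, and this is not maximal in $U$ because $\{0,a\}$ properly extends it. Your parenthetical attempt to rule out extensions by elements $z<x$ does not work: such a $z$ lies outside ${\uparrow}x$, so adjoining it cannot contradict the maximality of $C$ \emph{as a chain of} ${\uparrow}x$. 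Consequently the hypothesis that every maximal chain of $U$ is bounded above cannot be applied to $C$ directly. (The paper's own proof is terse at exactly this point: it applies the hypothesis to a maximal chain of ${\uparrow}x$ and asserts without argument that the resulting bound is maximal in $U$, so the gap you stumble into is one the paper glosses over as well.)

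The Zorn-style variant you sketch at the end does repair this and is correct: any nonempty chain $D$ of ${\uparrow}x$ extends (by Hausdorff) to a maximal chain of $U$, whose upper bound $m$ satisfies $m\geq d\geq x$ for any $d\in D$ and hence bounds $D$ inside ${\uparrow}x$; Zorn then yields a maximal element of ${\uparrow}x$, which is automatically maximal in $U$ and lies above $x$. Since you say you would present that version, the proposal is salvageable, but the chain argument as written should be discarded or fixed. The cleanest fix avoids ${\uparrow}x$ entirely: extend the one-element chain $\{x\}$ to a maximal chain $C'$ of $U$; by hypothesis $C'$ has an upper bound $m$, which must belong to $C'$ (else $C'\cup\{m\}$ would be a larger chain), is therefore the greatest element of $C'$, is maximal in $U$ (else $C'$ could again be enlarged), and satisfies $x\leq m$. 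This uses only the steps you already verified correctly.
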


\begin{proof}
Let $(U,\leq)$ be a partially ordered set and $x \in U$. Let us consider the partially
ordered set $(\{y \mid x \leq y\}, \leq_x)$, where $\leq_x$ is the order $\leq$ restricted to
$\{y \mid x \leq y\}$. Then, by the Hausdorff maximal principle, $\{y  \mid x \leq y\}$
has a maximal chain $C$.  By our assumption, the chain $C$ is bounded from above by
some element $m$. Because $m$ is a maximal element, it is in $S$ and $x\leq m$.
This implies $S^{\leq} = U$, that is, $S$ is dense. Therefore, $\cong_S$ is the least 
Boolean congruence $\Gamma$, and $\mathbb{IRS}_{\leq}(U)$ and $\mathbb{DRS}_{\leq}(U)$ 
are effective lattices.
\end{proof}

\begin{remark}
Clearly, if $U$ is finite, then Corollary~\ref{Cor:BoundenChain} holds, that is,
all rough set-based Nelson algebras determined by finite partially ordered sets are effective lattices.
\end{remark}

\begin{example}
Let $(U,\leq)$ be a partially ordered set with least element $0$ such that 
$U \setminus \{0\}$ is an antichain, that is, all elements in $U \setminus \{0\}$ are incomparable.
Clearly, the set of $\leq$-closed elements is $S = U \setminus \{0\}$ and $S^\leq = U$,
meaning that $S$ is cofinal and, by Lemma~\ref{Prop:Dense}, $\cong_S$ equals $\Gamma$ and 
$S$ is the least dense set. Additionally, $\mathcal{T}_\leq(U) = \wp(S) \cup \{U\}$, and
the congruence classes of $\cong_S$ are of the form $\{ X, X \cup \{0\} \}$, where $X \in \mathcal{T}_\leq(U)$.

It is also easy to observe that
\[ \textit{IRS}_\leq(U) = \{ (X,X\cup \{0\}) \mid X \subseteq S \, \}
 \cup \{ (\emptyset,\emptyset), (U,U) \}.
\]
So, in this case $\textit{IRS}_\leq(U)$ is order-isomorphic to $\wp(S)$ added with
a top element $\mathbf{1}$ corresponding to $(U,U)$ and a bottom element $\mathbf{0}$
corresponding to $(\emptyset,\emptyset)$, that is, $\textit{IRS}_\leq(U)$ is order-isomorphic
to $\mathbf{0} \oplus \wp(S) \oplus \mathbf{1}$. Note also that
in $\textit{IRS}_\leq(U)$, the pair $(\emptyset,\{0\})$ is the least dense set, which
means that in  $\textit{IRS}_\leq(U)$, all elements except $(\emptyset,\emptyset)$ are
dense.
\end{example}

We end this section by a presenting a necessary and sufficient condition under which 
Nelson algebras are isomorphic to effective lattices of rough sets determined by quasiorders.

\begin{theorem}\label{Thm:Representation}
Let $\mathbb{A}$ be a Nelson algebra. Then, there exists a set $U$ and a quasiorder 
$R$ on $U$ such that $\mathbb{IRS}_R(U)$ is an effective lattice and 
$\mathbb{A} \cong \mathbb{IRS}_R(U)$ if and only if $\mathbb{A}$
is defined on an algebraic lattice in which each completely 
join-irreducible element is comparable with at least one completely 
join-irreducible element which is a fixed point of $g$.
\end{theorem}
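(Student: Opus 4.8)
The plan is to prove both implications by combining the representation theorem of \cite{JarRad} with the explicit description of the completely join-irreducible elements of $\mathit{IRS}_R(U)$ and the three $g$-classes recalled above, and with Proposition~\ref{Prop:Dense}, which reduces the statement ``$\mathbb{IRS}_R(U)$ is an effective lattice'' to ``$S$ is cofinal in $U$''. Throughout I will use that a Nelson algebra isomorphism is in particular a complete lattice isomorphism, hence preserves complete join-irreducibility, and that it commutes with $g$, since $g(j) = \bigwedge \{ x \mid x \nleq {\sim} j\}$ is definable from $\leq$ and $\sim$; consequently the comparability condition appearing in the statement is transported across any such isomorphism.

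For the ``only if'' direction, suppose $\mathbb{A}\cong\mathbb{IRS}_R(U)$ with $\mathbb{IRS}_R(U)$ an effective lattice. Since $\mathit{IRS}_R(U)$ is an algebraic lattice, so is the lattice underlying $\mathbb{A}$, and by the previous remark it suffices to verify the comparability condition inside $\mathbb{IRS}_R(U)$. By Proposition~\ref{Prop:Dense}, $S$ is cofinal in $U$, and the fixed points of $g$ are exactly the pairs $(\{y\},\{y\}^R)$ with $y\in S$. Take any $j\in\mathcal{J}$. If $j=(\{x\},\{x\}^R)$ with $x\in S$, then $j$ is itself such a fixed point. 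Otherwise $j=(\emptyset,\{x\}^R)$ or $j=(R(x),R(x)^R)$ for some $x$ with $|R(x)|\geq 2$; choosing by cofinality some $y\in S$ with $x\,R\,y$, a short computation from reflexivity and transitivity of $R$ (namely $\{x\}^R\subseteq\{y\}^R$ in the first case, and $\{y\}\subseteq R(x)$ together with $\{y\}^R\subseteq R(x)^R$ in the second) gives $j\leq(\{y\},\{y\}^R)$ in the first case and $(\{y\},\{y\}^R)\leq j$ in the second. In every case $j$ is comparable with a completely join-irreducible fixed point of $g$.

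For the ``if'' direction, assume $\mathbb{A}$ is defined on an algebraic lattice in which every completely join-irreducible element is comparable with some completely join-irreducible fixed point of $g$. By \cite{JarRad} there are a set $U$ and a quasiorder $R$ on $U$ with $\mathbb{A}\cong\mathbb{IRS}_R(U)$, and the comparability condition then holds in $\mathbb{IRS}_R(U)$ as well. By Proposition~\ref{Prop:Dense} it is enough to show $S$ is cofinal in $U$. Fix $x\in U$ and consider $j=(R(x),R(x)^R)$, which lies in $\mathcal{J}$ for every $x\in U$. By hypothesis there is $y\in S$ with $(\{y\},\{y\}^R)$ comparable to $j$. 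If $(\{y\},\{y\}^R)\leq j$, then $y\in R(x)$, i.e. $x\,R\,y$ with $y\in S$. If instead $j\leq(\{y\},\{y\}^R)$, then $R(x)\subseteq\{y\}$, and since $x\in R(x)$ by reflexivity this forces $x=y\in S$, so $x\,R\,x$ with $x\in S$. In either case $x$ has an $R$-successor in $S$; hence $S$ is cofinal in $U$ and $\mathbb{IRS}_R(U)$ is an effective lattice.

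The main obstacle I anticipate is bookkeeping rather than any single deep step: one must keep the three $g$-classes of $\mathcal{J}$ straight, observe that $(R(x),R(x)^R)$ is completely join-irreducible for \emph{every} $x\in U$ (the subcase $x\in S$ yielding precisely a fixed point of $g$), and be careful that the comparability condition is genuinely preserved by the isomorphism — which is exactly what forces $g$ to be treated as part of the retained structure. Once these points are fixed, both directions collapse, via Proposition~\ref{Prop:Dense}, to the elementary equivalence between cofinality of $S$ and comparability of the members of $\mathcal{J}$ with the $g$-fixed points $(\{y\},\{y\}^R)$, $y\in S$.
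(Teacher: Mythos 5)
Your proposal is correct and follows essentially the same route as the paper: reduce ``effective lattice'' to cofinality of $S$ via Proposition~\ref{Prop:Dense}, invoke the representation theorem of \cite{JarRad}, transport the comparability condition through the isomorphism (which preserves $\mathcal{J}$ and commutes with $g$), and translate between cofinality and comparability using the explicit description of the $g$-classes of $\mathcal{J}$. The only cosmetic difference is in the converse, where you work with the single element $(R(x),R(x)^R)$ and a two-way case split instead of the paper's pair $j_1<j_2=g(j_1)$ squeezed around a fixed point; both verifications are sound.
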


\begin{proof}
For a Nelson algebra $\mathbb{A}$, there exists a set $U$ and a quasiorder $R$ on $U$ such that 
$\mathbb{A} \cong \mathbb{IRS}_R(U)$ if and only if $\mathbb{A}$ is defined on an 
algebraic lattice (for details, see \cite{JarRad}). Additionally, by Lemma~\ref{Prop:Dense}, 
we know that $\mathbb{IRS}_R(U)$ is an effective lattice if and only 
if $S$ is cofinal in $U$.

Assume that there exists a set $U$ and a quasiorder $R$ on $U$ such that 
$\mathbb{IRS}_R(U)$ is an effective lattice and $\mathbb{A} \cong \mathbb{IRS}_R(U)$.
Let $\varphi$ be the isomorphism in question. This implies that  
$\mathbb{A}$ is defined on an algebraic lattice $A$ and each
element of $A$ can be represented as a join of completely irreducible
elements of $\mathcal{J}$. Note that $\varphi$ preserves also the map $g$, that is, $\varphi(g(j))
= g(\varphi(j))$ for all $j \in \mathcal{J}$; see \cite{JarRad}.

Let $j \in \mathcal{J}$. If $j$ is a fixed point of $g$, then
$\varphi(j) = ( \{y\}, \{y\}^R)$ for some $y \in S$, and
we have nothing to prove since $j$ is trivially comparable with itself.
If $j$ is not a fixed point of $g$, then for $\varphi(j)$ we have two possibilities:
\begin{enumerate}[(i)]
 \item $\varphi(j) = (\emptyset,\{x\}^R)$ for some $x \in U$ such that $|R(x)| \geq 2$, or 
 \item $\varphi(j) = ( R(x), R(x)^R) )$ for some $x \in U$ such that  $|R(x)| \geq 2$.
\end{enumerate}

Without a loss of generality we may assume that $j < g(j)$. This means that 
there exists $x \in U \setminus S$ such that $\varphi(j) =(\emptyset,\{x\}^R)$ 
and  $\varphi(g(j)) = ( R(x), R(x)^R) )$. Because $S$ is cofinal, 
there exists $y \in S$ such that $x \, R \, y$.
Let $k$ be the element of $A$ such that $\varphi(k) = (\{y\},\{y\}^R)$. 
Obviously, $k \in \mathcal{J}$ and $g(k) = k$. Because $x \, R \, y$, we have
$\varphi(j) = (\emptyset,\{x\}^R) \leq (\{y\},\{y\}^R) = \varphi(k)$, and
hence $j \leq k$; note that this also means $k \leq g(j)$.

Conversely, assume $\mathbb{A}$ is defined on an algebraic lattice whose each completely 
join-irreducible element is comparable with at least one completely 
join-irreducible element which is a fixed point of $g$. Because $\mathbb{A}$ is defined
on an algebraic lattice, there exists a set $U$ and a quasiorder $R$ on $U$ such that 
$\mathbb{A} \cong \mathbb{IRS}_R(U)$ as Nelson algebras. Let us again denote this
isomorphism by $\varphi$. 
We show that $S$ is cofinal, which by Proposition~\ref{Prop:Dense} means that
$\mathbb{IRS}_R(U)$ is an effective lattice. Let $x \in U$. If $x \in S$,
then, by reflexivity, $x \, R \, x \in S$. If $x \notin S$, then $|(R(x)| \geq 2$, and
there are two elements $j_1 <j_2$ in $\mathcal{J}$ such that $g(j_1) = j_2$,
$\varphi(j_1) = (\emptyset,\{x\}^R)$, and $\varphi(j_2) = ( R(x), R(x)^R)$. 
Because $j_1$ (or equivalently $j_2$) is  comparable with at least one completely 
join-irreducible element $k$ which is a fixed point of $g$, this
necessarily means that $j_1 < k < j_2$. Let $\varphi(k) = (\{y\},\{y\}^R)$.
It follows that $y \in S$, and now $\varphi(j_1) = (\emptyset,\{x\}^R)
\leq (\{y\},\{y\}^R) = \varphi(k)$ gives $x \in \{x\}^R \subseteq \{y\}^R$, that is,
$x \, R \, y$. Thus, $S$ is cofinal. 
\end{proof}

\section{Concluding remarks} \label{Sec:Conclusions}

The results of this paper have been suggested by the very philosophy of rough set theory.
In an indiscernibility space $(U,E)$ two elements $x,y\in U$ are indiscernible if $E(y)=E(x)=\{x,y, ...\}$.
Indeed in rough set theory, the equivalence relation $E$ of an indiscernibility space $(U, E)$ is 
induced by attributes values. Therefore, if $x$ and $y$ are indiscernible, then there is no 
property which is able to distinguish $y$ from $x$. But if $E(x)=\{x\}$, then we are given a set of 
attributes which are able to single out $x$ from the rest of the domain. In other terms, 
$x$ is uniquely determined by the set of attributes. In a sense, about $x$ we have complete information.
It is not surprise, therefore, that on the union $S$ of all the singleton equivalence classes,
Boolean logic applies. That is, $S$ is a Boolean subuniverse within a three-valued universe. The fact that
$S$ is Boolean is expressed in two ways: by saying that $X_E\cup -X^E\supseteq S$ or, equivalently, that
$X_E\cap S=X^E\cap S$, meaning that any subset $X$ is \emph{exactly defined} with respect to $S$.

When we move to quasiorders, we face the same situation. A quasiorder $R$ expresses either a preference relation
(see for instance \cite{GrecoMatarazzoSlowinski}) or an information refinement. The latter notion is embedded
in that of a {\em specialisation preorder} which characterizes Alexandrov topologies (see \cite{Vickers89}).
Thus, if $x \in S$, then $x$ is a most preferred element or a piece of information which is
maximally refined. So, $R$-closed elements decide every formula. Otherwise stated, on $S$ excluded middle is valid.

This is the logico-philosophical link between rough set systems induced by a quasiorder $R$ and effective lattices.
If $\mathbb{IRS}_R(U)$ and $\mathbb{DRS}_R(U)$ are effective lattices, then $\cong_S$ is the Glivenko congruence 
and the set $S$ is cofinal, which means that for any $x \in U$, there exist an $R$-closed element $y$ such that 
$x \, R \, y$. Indeed, this is the characteristic  which distinguishes Miglioli's Kripke models for $E_0$ 
from Thomason's Kripke models for CLSN.

At the very beginning of the paper, we have seen the reasons why Miglioli's research group introduced the
operator {\bf T} and why this operator requires that for any information state $s$
there is a complete state $s'$ which extends $s$. Here ``complete'' means that
for any atomic formula $p$, either $s'$ forces $p$ or $s'$ forces the strong negation of $p$.
Those reasons were connected to problems in program synthesis and specification.
However we can find a similar issue in other fields.

For instance, S.~Akama \cite{Akama87} considers an equivalent system endowed with modal
operators to face the "frame problem" in knowledge bases.
In that paper, intuitively, it is required that any search for
{\em complete information} must  be successfully accomplished.
On the basis of our previous discussion it is easy to understand why
Akama satisfies this request by postulating that each maximal chain
of possible worlds ends with a greatest element
fulfilling  a Boolean forcing. Hence the set of these elements is dense.

Another interesting example is given by situation theory \cite{BarwisePerry83}.
Given a situation $s$ and a state of affairs $\sigma$, $s\models\sigma$ means that situation $s$
supports $\sigma$ (or makes $\sigma$ factual). In Situation Theory some assumptions are accepted as
"natural", for any $\sigma$:
\begin{enumerate}[\rm (i)]
\item Some situation will make $\sigma$ or its dual factual:\\
$\exists s(s\models\sigma \text{ or } s\models  {\sim} \sigma$).
\item No situation will make both $\sigma$ and its dual factual:\\
$\neg\exists s(s\models\sigma \text{ and } s\models {\sim} \sigma$).
\item\label{incomplete} Some situation will leave the relevant issue unsolved
(it is admitted that for some $s$, $s\nvDash\sigma$ and $s\nvdash {\sim} \sigma$).
\end{enumerate}
In contrast with assumption \eqref{incomplete}, the following, on the contrary, is a controversial thesis:
\begin{quote}
\it There is a largest total situation which resolves all issues.
\end{quote}
It is immediate to see that this thesis is connected to the scenario depicted by logic $E_0$.

%
\providecommand{\bysame}{\leavevmode\hbox to3em{\hrulefill}\thinspace}
\providecommand{\MR}{\relax\ifhmode\unskip\space\fi MR }
\providecommand{\MRhref}[2]{%
  \href{http://www.ams.org/mathscinet-getitem?mr=#1}{#2}
}
\providecommand{\href}[2]{#2}

\medskip\medskip\medskip\medskip

\end{document}